\documentclass[11pt]{article}
\usepackage{amssymb,amsmath,float}
\usepackage{amsthm}  

\usepackage[total={6.5in,8.75in}, top=1.2in, left=0.9in, includefoot]{geometry}
\usepackage{graphicx}
\usepackage{tikz}
\usepackage{enumitem}
\DeclareGraphicsRule{.tif}{png}{.png}{`convert #1 `dirname #1`/`basename #1 .tif`.png}

 \usepackage[T1]{fontenc}    

\usepackage{url}
\usepackage[pdftex, bookmarks, colorlinks, citecolor =blue, pdfencoding=auto, psdextra]{hyperref}  

\usepackage{xcolor}


\newcommand{\Eq}[1]{(\ref{eq:#1})}

\newcommand{\Lem}[1]{Lem.~\ref{lem:#1}}
\newcommand{\Con}[1]{Conj.~\ref{con:#1}}

\newcommand{\Sec}[1]{\S \ref{sec:#1}}
\newcommand{\Fig}[1]{Fig.~\ref{fig:#1}}
\newcommand{\Tbl}[1]{Table~\ref{tbl:#1}}
\newcommand{\App}[1]{Appendix~\ref{app:#1}}

\newcommand{\InsertFig}[4]
{\begin{figure}[h!t]
       \centerline{
         \includegraphics[width=#4\columnwidth]{./figures/#1}
       }
       \caption{{\footnotesize  #2}
       \label{fig:#3}}
\end{figure}}

\newcommand{\InsertFigTwo}[5] {
\begin{figure}[h!t]
       \centerline{
         \includegraphics[width=#5\columnwidth]{./figures/#1}
         \hskip 0.5in
         \includegraphics[width=#5\columnwidth]{./figures/#2}
       }
       \caption{{\footnotesize  #3}
       \label{fig:#4}}
\end{figure}}

\newcommand{\InsertFigOneThree}[8] {
\begin{figure}[h!t]
       \centerline{
\renewcommand{\arraystretch}{0.01}
         \begin{tabular}{cc}
         \includegraphics[width=#8\columnwidth]{./figures/#1} \\  \includegraphics[width=#7\columnwidth]{./figures/#2} 
         \hskip 0.15in
        \includegraphics[width=#7\columnwidth]{./figures/#3}  
        \hskip 0.15in
        \includegraphics[width=#7\columnwidth]{./figures/#4}
        \end{tabular}
       }
       \caption{{\footnotesize  #5}
       \label{fig:#6}}
\end{figure}}
\newcommand{\InsertFigFour}[7] {
\begin{figure}[h!t]
       \centerline{
\renewcommand{\arraystretch}{0.01}
         \begin{tabular}{cc}
         \includegraphics[width=#7\columnwidth]{./figures/#1} &  \includegraphics[width=#7\columnwidth]{./figures/#2} \\
        \includegraphics[width=#7\columnwidth]{./figures/#3}  &  \includegraphics[width=#7\columnwidth]{./figures/#4}
        \end{tabular}
       }
       \caption{{\footnotesize  #5}
       \label{fig:#6}}
\end{figure}}
\newcommand{\InsertFigSix}[9] {
\begin{figure}[h!t]
       \centerline{
\renewcommand{\arraystretch}{0.01}
         \begin{tabular}{cc}
         \includegraphics[width=#9\columnwidth]{./figures/#1} &  \includegraphics[width=#9\columnwidth]{./figures/#2} \\
        \includegraphics[width=#9\columnwidth]{./figures/#3}  &  \includegraphics[width=#9\columnwidth]{./figures/#4} \\
        \includegraphics[width=#9\columnwidth]{./figures/#5}  &  \includegraphics[width=#9\columnwidth]{./figures/#6}
        \end{tabular}
       }
       \caption{{\footnotesize  #7}
       \label{fig:#8}}
\end{figure}}


\newcommand{\bN}{{\mathbb{ N}}}

\newcommand{\bR}{{\mathbb{ R}}}

\newcommand{\bZ}{{\mathbb{ Z}}}
\newcommand{\cA}{{\mathcal{ A}}}
\newcommand{\cC}{{\cal C}}
\newcommand{\cB}{{\cal B}}

\newcommand{\cF}{{\cal F}}
\newcommand{\cG}{{\cal G}}
\newcommand{\cL}{{\cal L}}

\newcommand{\cR}{{\cal R}}



\newcommand{\eps}{\varepsilon}

\newcommand{\hen} {H\'enon }
\DeclareMathOperator{\sign}{sign}

\newtheorem{thm}{Theorem}
\newtheorem{lem}[thm]{Lemma}

\newtheorem{con}[thm]{Conjecture}


\usepackage{enumitem}
\usepackage{hyperref}
\newcounter{dummy}
\makeatletter 
    \newcommand\myitem[1][]{\item[#1]\refstepcounter{dummy}\def\@currentlabel{#1}}
\makeatother

\newcommand{\beq}[1]{\begin{equation}\label{eq:#1}}
\newcommand{\eeq}{\end{equation}}

\newenvironment{se}[1]{\equation\label{eq:#1}\aligned}{\endaligned\endequation}
\newcommand{\bsplit}[1]{\begin{se}{#1}}
\newcommand{\esplit}{\end{se}}



\newenvironment{example}[1][]
  {
	\setlength \leftmargini {1.0em}		
	\setlength \topsep {0.5em}			
	\begin{quote}
	{\it Example#1} }
	{\end{quote}
  }
\newcommand{\bexam}[1][:]{\begin{example}[#1]}
\newcommand{\eexam}{\end{example}}

\title{Connecting Anti-integrability to Attractors for Three-Dimensional, Quadratic Diffeomorphisms}
\author{Amanda E. Hampton and James D.~Meiss\thanks
      {
        The authors were supported in part by NSF grant DMS-181248 and a donation from Northrup Grumman.
        Useful conversations with Yi-Chiuan Chen are gratefully acknowledged.
      }
    \\
 \begin{tabular}{c}
	Department of Applied Mathematics\\
    University of Colorado \\
	Boulder, CO 80309-0526 \\
	Amanda.Hampton@colorado.edu\quad
	James.Meiss@colorado.edu\\ 
\end{tabular}
}

\date{\today}

\begin{document}

\maketitle

\begin{abstract}
We previously showed that three-dimensional quadratic diffeomorphisms have anti-integrable (AI) limits that correspond to a quadratic correspondence; a pair of one-dimensional maps. At the AI limit
the dynamics is conjugate to a full shift on two symbols. Here we consider a more general AI limit, allowing two parameters of the map to go to infinity. We prove the existence of AI states for each symbol sequence for three cases of the quadratic correspondence: parabolas, ellipses and hyperbolas. A contraction argument gives parameter domains such that this is a bijection, but the correspondence also is observed to apply more generally. We show that orbits of the original map can be obtained by numerical continuation for a volume-contracting case. These results show that periodic AI states evolve into the observed periodic attractors of the diffeomorphism. We also continue a periodic AI state with a symbol sequence chosen so that it continues to an orbit resembling a chaotic attractor that is a  3D version of the classical 2D \hen attractor.
\end{abstract}

\tableofcontents 

\section{Introduction}\label{sec:Intro}

While dynamical systems can have \textit{simple} behavior when they are integrable, they can also behave relatively simply at a complementary limit, where they are anti-integrable (AI) \cite{Aubry90}. For the former, all orbits lie on invariant tori determined by a complete set of invariants of the system, and for the latter all orbits are represented by a Bernoulli shift on a finite set of symbols---a pure form of chaos. Indeed, just as one can continue from an integrable system to one that is nearly integrable to find persistent tori using KAM theory, one can also continue from an AI limit to find chaotic dynamics, under an assumed nondegeneracy condition \cite{MacKay92b, Aubry95,Sterling98}.
These two methods, while fundamentally different, can both help elucidate
the more difficult, intermediate case where their can be a complex mixture
of both regular and chaotic motion.

In this paper we continue our application of these ideas to the study of quadratic diffeomorphisms \cite{Hampton22,Hampton22b}. Our work is based on a similar analysis for the 2D \hen map \cite{Sterling98,Sterling99,Dullin05, Chen06},
but extended to three-dimensional quadratic diffeomorphisms \cite{Lomeli98}.
These quadratic maps have a two-symbol AI limit, and upon continuation, each symbol sequence becomes an orbit of the deterministic map. The simplest continuation argument uses the contraction mapping theorem, but numerical methods can continue these orbits beyond the parameter regions that the theorem applies.

Anti-integrability in higher-dimensional systems, written as scalar difference equations, has also been
developed by \cite{Du06,Li06,Juang08,Li10} and has been used to extend \cite{MacKay92b} to more
general multi-dimensional cases \cite{Chen15}. We compared our approach to these ideas in \cite{Hampton22}.
Our goal in the current paper is to extend these arguments to a more general anti-integrable limit that includes two limiting parameters.

We study the 3D quadratic diffeomorphism $L:\bR^3 \to \bR^3$ with quadratic inverse which,
as was shown in shown in \cite{Lomeli98, Lenz99}, can be written in the form
\bsplit{Q3DMap}
	L(x,y,z) &= (\delta z+ \alpha + \tau x - \sigma y + Q(x,y), x,y), \\
	Q(x,y) &\equiv ax^2 + b xy + cy^2.
\esplit
The orbits of \Eq{Q3DMap} are sequences $\{(x_t,y_t,z_t): t \in \bZ\}$ that satisfy
\[
   (x_{t+1},y_{t+1},z_{t+1}) = L(x_{t},y_{t},z_{t}).
\]
The map has seven parameters $(\alpha,\tau,\sigma,a,b,c)$, and $\delta = \det{DL}$, the Jacobian determinant.
The volume-preserving case, $\delta = 1$, is as a normal form near a fixed point with a triple-one multiplier \cite{Dullin08a}.  The volume-contracting case, $|\delta| < 1$, arises as a normal form near homoclinic bifurcations \cite{Gonchenko06} and can give rise to discrete Lorenz-like attractors \cite{Gonchenko21a}. 

We introduce the convention described in \cite{Lomeli98}: assuming $a+b+c\neq 0$ and $2a+b\neq0$,
then an affine coordinate transformation allows one to set 
\beq{ParamSpace}
    a+b+c = 1 \mbox{ and } \tau = 0 .
\eeq
We adopt this simplification so the map only depends on $(\alpha, \sigma, a, c)$, and the Jacobian $\delta$.\footnote
{If $c=1$ and $a=b=0$, then it would be more appropriate to assume $b+2c\neq0$ and set $\sigma=0$.}

Following \cite{Hampton22}, to set up the anti-integrable limit, first rewrite \Eq{Q3DMap} as a third-order difference equation for $\{x_t: t \in \bZ\}$ upon noting that $y_{t+1} = x_t$ and $z_{t+1} = x_{t-1}$,
\beq{VPDiff}
	x_{t+1} = \delta x_{t-2} + \alpha  - \sigma x_{t-1} + Q(x_t,x_{t-1}).
\eeq
It is convenient to then rescale the phase space variables, defining $\xi_t = \eps x_t$, thus introducing a
parameter $\eps$. Then, using \Eq{ParamSpace}, \Eq{VPDiff} becomes
\bsplit{Rescaled}
    0 & = \cL_\eps(\xi_{t+1},\xi_t,\xi_{t-1},\xi_{t-2}) \\
      &= Q(\xi_t, \xi_{t-1}) + \eps^2 \alpha - 
          \eps(\xi_{t+1} + \sigma \xi_{t-1}- \delta \xi_{t-2}).
\esplit

Generally a difference equation like \Eq{Rescaled} has an AI limit if it degenerates in some way---say to a lower-order  equation, such that it is no longer a deterministic map. Such a limit is useful if (a) the orbits at the AI limit can be simply characterized (for example by symbolic dynamics), and (b) if each such orbit can be shown to continue away from the limit (for example by a contraction mapping argument or by numerical continuation). This is the method introduced in \cite{Sterling98} for the \hen map, that we extended in \cite{Hampton22} for \Eq{Rescaled}.

To get such a limit we will assume that $a,b,c$ are ``structural'' parameters that remain finite and that $|\delta| \le 1$ so that the map is not volume expanding.
Different AI limits can be categorized by scaling the remaining parameters $\alpha$ and $\sigma$ with $\eps$ so that the third-order difference equation \Eq{Rescaled} degenerates to a lower-order, non-deterministic system at $\eps = 0$. In \cite{Hampton22}, the parameter $\eps$ 
was defined by $\alpha\eps^2 = 1$. In this case $\eps \to 0$ corresponds to the anti-integrable limit $\alpha \to -\infty$. Here we will allow two parameters to be unbounded: $\alpha \to -\infty$ and $\sigma \to \pm\infty$. Setting
\beq{rDefine}
    \alpha=-\eps^{-2}, \quad \sigma= r \eps^{-1} ,
\eeq
so that $\sigma^2/\alpha = - r^2$ is finite, results in the difference equation
\beq{RescaledDifEq}
    \cL_\eps(\xi_{t+1},\xi_{t},\xi_{t-1},\xi_{t-2}) 
      = Q(\xi_t,\xi_{t-1})  - r\xi_{t-1} - 1 - \eps(\xi_{t+1}-\delta \xi_{t-2}),
\eeq
When $\eps =  0$, \Eq{RescaledDifEq} degenerates to
\beq{AILimit}
    Q(\xi_t,\xi_{t-1})= r\xi_{t-1} + 1,
\eeq
The implication is that sequential points must lie on a quadratic curve, which is determined by the discriminant,
\beq{DeltaDefine}
    \Delta \equiv b^2-4ac.
\eeq
For $\Delta<0$, \Eq{AILimit} defines an ellipse in the $(\xi_{t-1},\xi_t)$-plane, $\Delta=0$ gives a parabola,
and $\Delta>0$, a hyperbola. Note that when $\Delta = -4ac =ar^2$, the curve \Eq{AILimit} becomes degenerate in the sense that the parabola turns into a pair of parallel lines, the hyperbola becomes two intersecting lines, and the ellipse becomes a single point. The discriminant will play a crucial role in the analysis presented in \Sec{AICase}.

For the remainder of our discussion we assume $a\neq 0$ and set $b=0$, so that $a=1-c$ and $\Delta=-4ac$ now only depends on the value of $c$. A proof is given in \Sec{Existence} for the existence of AI states, which uses a similar contraction mapping argument found in \cite{Hampton22}, but results in a larger parameter region. The remainder of \Sec{Existence} gives numerical results for the region of existence of AI states in $(r,c)$-space. In \Sec{AnalyticalBounds} we will see that this region appears to converge to a simpler one that we compute analytically. We present details of the computation of this analytical region for the three classes of quadratic curve.

In \Sec{SCcase} we focus on a ``strongly contracting'' case, where $\delta = 0.05$,
that we also studied in \cite{Hampton22b}. 
Since the Jacobian determinant is small, the orbits of this map are close to those of a \hen map that \Eq{Q3DMap} reduces to at $\delta = 0$.
In \cite{Hampton22b} we found periodic, regular aperiodic, and chaotic \hen-like attractors for this case for   regions in $(\alpha,\sigma)$. The resonances that correspond to periodic attractors mimic the Arnold-tongues seen for circle maps and the ``shrimps'' seen in two parameter families of 1D maps \cite{Gallas94,MacKay87f,Facanha13}. In \Sec{SCcase}, we redo these computations using the parameters   $(\alpha,r)$. 

We then continue the AI states away from $\eps = 0$, to see how, at least for low periods, they evolve into the observed attractors. In each case we see that the attractor can be attributed to an orbit that connects to the AI limit---supporting the ``no-bubbles'' conjecture of \cite{Sterling99}.
Lastly, the continuation method is used to find periodic approximations to a \hen-like attractor. The resulting AI state for the periodic approximation continues to an orbit that is a good approximation of the attractor.

\section{Two-Parameter Anti-integrable Limit}\label{sec:AICase}

When $b=0$, $a=1-c$ and \Eq{AILimit} becomes
\beq{ReducedAILimit}
    a\xi_t^2+c\xi_{t-1}^2 =r\xi_{t-1}+1,
\eeq
a quadratic curve symmetric about the horizontal axis and centered at
\beq{ConicCenter}
    \xi^o=\tfrac{r}{2c}.
\eeq
The discriminant reduces to $\Delta = 4c(c-1)$, so this quadratic curve is simply determined by the value of c:
\bsplit{CurveCategories}
\text{Ellipse: } \Delta<0 &\implies 0<c<1 \\
\text{Parabola: } \Delta=0 &\implies c=0 \text{ and } c=1 \\
\text{Hyperbola: } \Delta>0 &\implies c<0 \text{ and } c>1
\esplit
In the $(r,c)$ plane these correspond to the regions sketched in \Fig{RCPlane}: blue for ellipses, tan for hyberbolas, and the bounding black lines for parabolas.  Recall that $\Delta = -4ac = ar^2$ leads to degeneracies. Here, there are three cases: when $ c= 1-a = 1$,  \Eq{ReducedAILimit} becomes a pair of vertical lines (recall that we assume $a\neq 0$, though we do consider the $a=0$ case for the backwards map); when $c= -\tfrac{r^2}{4}$, \Eq{ReducedAILimit} becomes a pair of intersecting lines; and finally when $c=r=0$, a pair of horizontal lines. In \Fig{RCPlane} these are the red, dashed curves. Finally, insets in this figure show examples of the quadratic curve \Eq{ReducedAILimit} in each $(r,c)$ region or boundary.

\InsertFig{ConicInsets}{Regions in the $(r,c)$-plane that correspond with the AI curve \Eq{AILimit} classification: elliptic (blue) with $\Delta<0$, hyperbolic (tan) with $\Delta>0$, and parabolic (black lines) with $\Delta=0$. The red-dashed curves correspond to degeneracies. Insets are included of the curve \Eq{AILimit} to show what type of curve \Eq{AILimit} corresponds to with different $(r,c)$-values.}{RCPlane}{0.6}

An AI state is a sequence $\{\xi_t: t\in \bZ\}$ that lies on the curve \Eq{AILimit} for all $t \in \bZ$. 
Since the map \Eq{Q3DMap} is three-dimensional, the curve \Eq{AILimit} represents a surface in $\bR^3$. This relation must hold for all $t$. Therefore, if the axes are labeled as $(\xi_{t-1},\xi_{t},\xi_{t+1})$, AI states must lie on the intersection of the two surfaces
\[
    \left\{Q(\xi_t,\xi_{t-1})=r\xi_{t-1}+1 \right\} \cap \left\{Q(\xi_{t+1},\xi_t)=r\xi_t+1 \right\}.
\]
For example, when $\Delta <0$ this is the intersection of two elliptical cylinders \cite{Hampton22}.

Dynamically, \Eq{ReducedAILimit} can be thought of as a non-deterministic quadratic correspondence, and its solutions, provided they exist, are AI states. These can be obtained by solving \Eq{ReducedAILimit} for $\xi_t$, giving a pair of 1D maps:
\beq{AIMap}
    \xi_t = f_{s_{t}}(\xi_{t-1}) = 
         s_t\sqrt{\frac{-c \xi_{t-1}^2+r\xi_{t-1}+1}{a}} , \quad s_t \in \{-,+\} ,
\eeq
since we have assumed $a \neq 0$.\footnote
{If $a = 0$ the set \Eq{ReducedAILimit} is a pair vertical lines at the fixed points \Eq{FixedPt} and an
AI state is simply a sequence of these points $\xi_t = \xi_\pm$.}

If the radicand of \Eq{AIMap} is strictly positive, either choice $s_t = \pm$ is valid, and each point $\xi_{t-1}$ has
two images defined by the branches, $f_-$ and $f_+$, respectively.  
Note that $f_+ \ge 0$ and $f_- \le 0$ and the maps are symmetric, $f_-(\xi_{t-1})= -f_+(\xi_{t-1})$. 
Consequently, whenever $\xi_t \neq 0$ is an AI state, it has a unique symbol sequence
\beq{SigmaDefine}
    s = \{\ldots s_0,s_1,s_2\ldots \}  \in \Sigma \equiv \{- ,+\}^\infty ,
\eeq
so that $s_t = \sign(\xi_t)$ represents the branch of \Eq{AILimit} at time $t$. 
For example, the fixed points of \Eq{AIMap} correspond to the symbol sequences $s=\{+\}^\infty$ 
and $\{-\}^\infty$, and are
\beq{FixedPt}
    \{\pm\}^\infty: \xi_{\pm} =\tfrac12(r\pm\sqrt{r^2+4}).
\eeq
When $r=0$, a case we previously studied \cite{Hampton22b}, the fixed points are simply the symbol sequence themselves, i.e., $\xi_t=s_t$.
Fixed points exist for any $(r,c)$, and are the unique orbits with the symbol sequences $\{+\}^\infty$ and $\{-\}^\infty$.


\section{Existence of AI states}\label{sec:Existence}

In this section, we will extend the results of \cite{Hampton22} to the case $r \neq 0$ for \Eq{ReducedAILimit} on the existence and uniqueness of AI states.
In order that a point has a forward orbit under the relation \Eq{AIMap},
it is sufficient for the range to be be a subset of the domain.
In addition, so that each AI state have an unambiguous symbol sequence, we require that the image not include the origin.
This is equivalent to requiring the radicand of \Eq{AIMap} to be strictly positive.
Thus we suppose there exists a set $B \subset \bR$ so that
\beq{MapsInto}
    f_\pm(B) \subset B \setminus \{ 0 \}.
\eeq

The correspondence between symbol sequences and AI states becomes bijective when the derivative,
\beq{AIMapDeriv}
f'_{s_t}(\xi_{t-1})
             = \frac{s_t(r-2c\xi_{t-1})}{2\sqrt{a(-c\xi_{t-1}^2+r\xi_{t-1}+1)}}.
\eeq
has magnitude less than one on $B$. Indeed, when $B$ is compact, as we noted in \cite{Hampton22},
the contraction mapping theorem implies this bijection: 
for each $s \in \Sigma$ there is a unique $\{\xi_t\} \in B^\infty$ satisfying \Eq{AIMap}.

In \Sec{OneToOne} we generalize this result, requiring only that the $n$-step composition has an absolute slope 
less than one for some $n \in \bN$. 
This allows for points of the orbit to lie on portions of the curve \Eq{ReducedAILimit} with a `steeper' slope, 
thus increasing the parameter regime where we can prove the existence of AI states. In \Sec{NumericalBounds}
we compute the $n$-step bounds numerically.

\subsection{One-to-One Correspondence}\label{sec:OneToOne}

Suppose that $B \subset \bR$ is compact, and that $\xi = \{\ldots \xi_{-1}, \xi_0, \xi_1,\ldots\}$ denotes a sequence 
in
\beq{BDefine}
    \cB=B^\infty\subset \bR^\infty,
\eeq
the countable product of $B$. Define $\cF: \cB \times \Sigma \to \cB$ by
\beq{AIMapF}
    \cF_t(\xi; s) = f_{s_t}(\xi_{t-1}),
\eeq
for each $s\in\Sigma$ \Eq{SigmaDefine} and $t \in \bZ$.
Note that any fixed point of $\cF$ is an orbit of \Eq{AIMap} with sequence $s$.
More generally, for each $k \in \bN$, let $\cF^k: \cB \times \Sigma \to \cB$ denote the $k^{th}$ iterate,
\[
    \cF^k_t(\xi, s) = f_{s_{t}} \circ f_{s_{t-1}} \circ \ldots \circ f_{s_{t-k+1}}(\xi_{t-k}) 
\]
For example, if $s$ is a period-$k$ symbol sequence (e.g., $s_{t+k} = s_t, \forall t \in \bZ$) and $\xi$ is a period-$k$ orbit of \Eq{AIMap}, then it is a  fixed point of $\cF^k$. 

Lastly, for $n\in\bN$, define the set of parameters
\beq{RnPlus}
    \cR_n^+ = \{(r,c): \exists\, B \neq \emptyset, \, f_{s_t}(B) \subset B, \, 
       \|D\cF^n\|_\infty<1 \},
\eeq
where
\beq{DFn}
    \|D\cF^n\|_\infty = \max_{s_0,s_1,\ldots s_{n-1}} \sup_{x\in B} \left| \tfrac{d}{dx}(f_{s_{n-1}}(f_{s_{n-2}}(\ldots (f_{s_0}(x))\ldots))) \right|,
\eeq
i.e., the $\infty$-norm  on $\cB \times \Sigma$. 
A simple extension of the result in \cite{Hampton22} is the following.

\begin{lem}\label{lem:AIGeneralContraction} 
Given $(r,c) \in \cR_n^+$ and $\xi\in\cB$, there is a one-to-one correspondence between each sequence $s \in \Sigma$ 
and state $\{\xi\} \subset \cB$ satisfying \Eq{AIMap}.
\end{lem}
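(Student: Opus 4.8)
The plan is to read \Lem{AIGeneralContraction} as a fixed-point statement for the operator $\cF(\cdot;s)$ of \Eq{AIMapF} and to apply the contraction mapping theorem not to $\cF$ itself, but to its $n$-th iterate. First I would equip $\cB=B^\infty$ from \Eq{BDefine} with the sup metric $d(\xi,\eta)=\sup_{t\in\bZ}|\xi_t-\eta_t|$; since $B$ is compact and hence complete, $(\cB,d)$ is a complete metric space. The condition $f_{s_t}(B)\subset B$ built into \Eq{RnPlus} guarantees that for each fixed $s\in\Sigma$ the map $\cF(\cdot;s)$ sends $\cB$ into itself, and a sequence $\xi$ is a fixed point of $\cF(\cdot;s)$ precisely when $\xi_t=f_{s_t}(\xi_{t-1})$ for all $t$, i.e.\ when $\xi$ is an AI state realizing the symbol sequence $s$.

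The key step is to show that $\cF^n(\cdot;s)$ is a contraction with ratio $\|D\cF^n\|_\infty<1$, uniformly in $s$. Writing out the $t$-th component, the difference $\cF^n_t(\xi;s)-\cF^n_t(\eta;s)$ is the composition $f_{s_t}\circ\cdots\circ f_{s_{t-n+1}}$ evaluated at $\xi_{t-n}$ minus the same evaluated at $\eta_{t-n}$; by the chain rule and the mean value theorem this is bounded in magnitude by the derivative of that $n$-fold composition at some interior point, times $|\xi_{t-n}-\eta_{t-n}|$. Since the particular block $(s_{t-n+1},\ldots,s_t)$ is one of the choices maximized over in \Eq{DFn}, the derivative factor is at most $\|D\cF^n\|_\infty$, giving $|\cF^n_t(\xi;s)-\cF^n_t(\eta;s)|\le\|D\cF^n\|_\infty\,d(\xi,\eta)$; taking the supremum over $t$ yields $d(\cF^n(\xi;s),\cF^n(\eta;s))\le\|D\cF^n\|_\infty\,d(\xi,\eta)$, a genuine contraction.

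I would then invoke the standard fact that if an iterate $T^n$ of a self-map $T$ of a complete space is a contraction, then $T$ has a unique fixed point: $\cF^n(\cdot;s)$ has a unique fixed point $\xi^{(s)}$, and since $\cF(\cdot;s)$ commutes with $\cF^n(\cdot;s)$, the point $\cF(\xi^{(s)};s)$ is again fixed by $\cF^n(\cdot;s)$ and hence equals $\xi^{(s)}$; uniqueness for $\cF$ follows because any fixed point of $\cF$ is fixed by $\cF^n$. Moreover $\xi^{(s)}$ is the limit of $\cF^{kn}(\xi;s)$ as $k\to\infty$ starting from the given $\xi\in\cB$, which is the sense in which the state is produced. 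This yields, for each $s\in\Sigma$, existence and uniqueness of an AI state with symbol sequence $s$.

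To upgrade this to the claimed one-to-one correspondence, I would observe that $\|D\cF^n\|_\infty<1$ forces the radicand of \Eq{AIMap} to stay bounded away from zero on $B$---otherwise the denominator in the derivative would make it unbounded---so $0\notin f_{s_t}(B)$ and every entry of $\xi^{(s)}$ is nonzero. Because $f_+\ge0$ and $f_-\le0$ agree only at the origin, the sign $s_t=\sign(\xi^{(s)}_t)$ recovers the symbol at each $t$; hence distinct sequences give distinct states and $s\mapsto\xi^{(s)}$ is injective, with inverse $\xi\mapsto\{\sign(\xi_t)\}$, establishing the bijection. The main obstacle I anticipate is not any single estimate but the bookkeeping in the second paragraph: one must verify that $\|D\cF^n\|_\infty$ as defined in \Eq{DFn}, maximizing over all length-$n$ symbol blocks and over $x\in B$, is a uniform Lipschitz bound for $\cF^n(\cdot;s)$ for every bi-infinite $s$, and confirm that the operator $\cF^n$ of the text coincides with the genuine $n$-fold functional iterate of $\cF$ so that the $T^n$-contraction argument legitimately applies.
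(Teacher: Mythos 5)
Your proposal is correct and follows essentially the same route as the paper: both apply the contraction mapping theorem to the $n$-fold iterate $\cF^n$ on the complete space $\cB$, use $\|D\cF^n\|_\infty<1$ as the uniform Lipschitz constant, and recover the symbol sequence from $s_t=\sign(\xi_t)$ using the positivity of the radicand. The only minor difference is how the fixed point of $\cF$ itself is produced: you use the standard commutation argument (since $\cF$ commutes with $\cF^n$, the image $\cF(\xi^{(s)};s)$ is again fixed by $\cF^n$ and so equals the unique fixed point $\xi^{(s)}$), whereas the paper directly asserts the existence of a bi-infinite orbit $\eta^*$ of the 1D map and identifies it with the unique fixed point of $\cF^n$ --- your version is, if anything, the more careful of the two.
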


\begin{proof}
Consider a pair of sequences $\xi, \eta \in \cB$. Given some $s\in\Sigma$ and $\cF$ \Eq{AIMapF}, the fixed points of $\cF$ are orbits of the map \Eq{AIMap}. Since $B$ is a compact subset of $\bR$, and $\cB$ is complete in the $\ell^\infty$ norm. Then
\[
    \| \cF^n(\xi) -\cF^n(\eta)\|_\infty \le ||D\cF^n||_\infty \, \|\xi - \eta\|_\infty,
\]
so that for parameters in $\cR_n^+$,  $\cF^n$ is a contraction. Thus $\cF^n$ has a unique fixed point $\xi^* = \cF^n(\xi^*)$.  

When $(r,c) \in \cR_n^+$, $f_{s_t}(B) \subset B$; therefore, for each $s \in \Sigma$, there is an $\eta^* \in \cB$ such that
for each $t\in \bZ$, $\cF_t(\eta^*,s)= \eta^*_t$, since this is simply an orbit of the 1D map \Eq{AIMap}. 
Since $\eta^*=\cF(\eta^*)$, clearly we also have $\eta^*=\cF^n(\eta^*)$. 
Since the fixed point of $\cF^n$ is unique, $\eta^*=\xi^*$. 
Hence, each symbol sequence has a unique corresponding orbit of \Eq{AIMap}. 

Note that, by construction, the conditions $(r,c) \in \cR_n^+$ and $\xi \in B$ guarantee that the radicand of \Eq{AIMap} is strictly positive, implying that every state of \Eq{AIMap} has a unique symbol sequence $s\in\Sigma$. Therefore, there exists a one-to-one correspondence between symbol sequences and states of \Eq{AIMap}.
\end{proof}

Additionally, when $c \neq 0$, the argument of \Lem{AIGeneralContraction} can be applied using the backwards map,
\beq{BackwardsAIMap}
\xi_{t-1}=g_{s_t}(\xi_t)=\frac{1}{2c}\left(r + s_t \sqrt{r^2+4c-4ac\xi_t^2}\right),
\eeq
which is obtained by solving \Eq{ReducedAILimit} for $\xi_{t-1}$. Given a similar region, $\cR^-_n$, to \Eq{RnPlus}, defined using \Eq{BackwardsAIMap}, the proof of a one-to-one correspondence is the same as that for \Lem{AIGeneralContraction}.

\subsection{Numerical Verification of AI States}\label{sec:NumericalBounds}
Of course, it can be a nontrivial task to compute the regions $\cR_n^{\pm}$ required for \Lem{AIGeneralContraction}.
Here we will assume that the set $B$ which satisfies \Eq{RnPlus} is a closed interval:
\beq{BInterval}
    B=[-\beta,\beta].
\eeq
Though this is probably not necessary, it makes the calculations simpler.
The specific choices for $\beta$ depend on $\Delta$ and will be given below in \Sec{AnalyticalBounds}.
Anticipating these results, we will compute in this section the region $\cR_n^{\pm}$ in the $(r,c)$ plane where \Eq{DFn} is achieved for $n$ up to $15$.

Figure \ref{fig:RC_PlaneAIExist}(a) shows the results of computations for a grid of $500^2$ parameter 
points in the region $|r| \le 3$ and $|c| \le 3$.
For each $(r,c)$ point, $100$ initial conditions along $B$ are iterated $n$ times using 
$f_{s_t}$ \Eq{AIMap} for $\cR_n^+$ or $g_{s_t}$ \Eq{BackwardsAIMap} for $\cR_n^-$. This is done for each of the 
$2^n$ possible symbol sequences $\{s_1,s_2, \ldots s_n\} \in \{+,-\}^n$ for $n\leq15$. 
For each iteration the derivative \Eq{DFn} is estimated as the maximum over the grid of orbits and the set of symbols.
If $\|D\cF^n\|_\infty<1$, then the parameters are added to the region $\cR_n^\pm$.
The results, shown in \Fig{RC_PlaneAIExist}(a), have a maximum of $n=12$ forward steps, and $n=14$ backward steps (i.e., using $g_\pm$).
The enlargement, in \Fig{RC_PlaneAIExist}(b), shows similar results on a $500^2$ parameter grid for
$|r| \le 1.17$ and $-0.35 \le c \le 0.75$, with a maximum of $n=15$ steps in each direction. 
In each case the color scale gives the number of forward iterates for $\cR_n^+$ (yellow to green) and backward iterates for $\cR_n^-$ (cyan to magenta). The white region in the figures corresponds to parameters for which the bound on the derivative is never attained. Of course, the regions are necessarily nested: $\cR_{n-1}^\pm \subset \cR_{n}^\pm$. Moreover, as $n$ grows the enlargement of the regions becomes relatively small and they seem to converge. We will say more about this in the next section.

Note that most of the $(r,c)$ values in the regions shown in \Fig{RC_PlaneAIExist}(a)
correspond to the hyperbolic case, either $c > 1$ for $\cR_n^-$ where the
hyperbola is more \textit{vertical} or $c< -r^2/4$ for $\cR_n^+$ where the hyperbola is more \textit{horizontal}. That the slopes should be
small for the backwards or forward map, respectively, in these cases can also be seen in the insets in \Fig{RCPlane}.
It is more delicate to satisfy the conditions for \Lem{AIGeneralContraction} for the elliptic case, $0 < c < 1$;
indeed, for an ellipse the slope is necessarily zero or infinite at the vertices, and so $B$ must be more carefully selected to avoid these points.

\InsertFigFour{RCPlane_Derivs_Both}{RCPlane_Derivs_Both_CloseUp}{RCPlane_AllAnalyticRegions_Both}{RCPlane_AllAnalyticRegions_Both_CloseUp}
{(a) Numerically computed approximations to $\cR_n^\pm$.
Colors correspond with the number of forward (yellow to green) or backward (cyan to magenta) iterates that are used. The maximum number of steps is $n=12$ for the forward direction and $14$ for the reverse direction. 
(b) An enlargement of (a) about the origin where $|r| \le 1.17$ and $-0.35 \le c \le 0.75$, for $n$ up to $15$ in either direction. (c) The analytical regions $\cR_\cA^\pm$ \Eq{RA} in the $(r,c)$ plane so that $f_s(B) \subset B$. for \Eq{AIMap}. Colors correspond with the different $\Delta$-cases described in the subsections of \Sec{NumericalBounds}: hyperbolic (red for $c>1$, green, and purple for $c<0$), 
elliptic (tan, blue, and purple for $0 < c < 1$) and parabolic (an interval along $c = 0$ and $c=1$).
(d) An enlargement of (c) using the same bounds as (b).}
{RC_PlaneAIExist}{0.5}

\section{Analytical Bounds} \label{sec:AnalyticalBounds}

In this section, we report results for the parameters $(r,c)$ that \textit{only} satisfy the condition \Eq{MapsInto},
the simplest requirement so that each point has a forward or backward orbit as well as an unambiguous symbol sequence.
We denote these sets by
\bsplit{RADefine}
    \cR_\cA^+ &\equiv \{ (r,c) : f_s(B ) \subset B\setminus \{0\}, s \in \{+,-\} \} ,\\
    \cR_\cA^- &\equiv \{ (r,c) : g_s(B ) \subset B\setminus \{0\}, s \in \{+,-\} \} ,\\
 \esplit
using the forward map \Eq{AIMap} and the backward map \Eq{BackwardsAIMap}, respectively.
We will again assume that $B$ is a closed interval of the form \Eq{BInterval}.

The results will be obtained in the following subsections for the three classes of
quadratic curves: parabolas, ellipses, and hyperbolas.
These calculations lead to the forms
\beq{RA}
    \begin{array}{cllcl}
      \cR_\cA^+  &= &\left\{(r,c) : |r|\leq \tfrac{2}{\sqrt{15}} \,,\, c < \cC_2(r) \right\} 
        &\bigcup&\,   \left\{ (r,c) : \tfrac{2}{\sqrt{15}} \leq |r|\leq \tfrac{2}{\sqrt{3}} \,,\, c < 1+|r|(|r|-\sqrt{r^2+4}) \right\} \\ 
         & & & \bigcup & \,  \left\{(r,c) :   |r|\geq \tfrac{2}{\sqrt{3}} \,,\,  c< -\tfrac{r^2}{4} \right\}, \\
    \cR_\cA^- &=& \left\{(r,c) : c > \cC_3(r) \right\}
    \end{array} .
\eeq
Here, the functions $\cC_i(r)$ are roots of the cubic polynomial
\beq{EllipsePoly}
   P(c) =  64c^3+32(r^2-2)c^2+(r^2-4)(5r^2-4)c-4r^4 ;
\eeq
these are real when $|r| < 2\sqrt{2}/5$.
The calculations for $\cR_\cA^-$ are summarized \App{BWMapCalcs}. 

The regions \Eq{RA} are shown in \Fig{RC_PlaneAIExist}(c) and (d). They include subsets of the hyperbolic case (red for $c>1$, green, and purple for $c<0$), 
the elliptic case (tan, blue, and purple for $0 < c < 1$) and the parabolic case (an interval along $c = 0$ and $c=1$).

Comparing the upper and lower panels of \Fig{RC_PlaneAIExist}, it appears that the numerically found regions $\cR_n^+ $ and $\cR_n^-$ converge onto the analytically determined regions \Eq{RA} as $n$ grows.
To verify this we computed Hausdorff distances between $\cR_n^\pm$ and $\cR_\cA^\pm$, listed in \Tbl{Hausdorff} as a function of $n$.
Here we compare the sets on the domain $(r,c)=[0,\tfrac{2}{\sqrt{3}}] \times [-\tfrac13,\tfrac45]$, using a  $500^2$ grid. This region is chosen to take advantage of symmetry $r \to -r$ .

\begin{table}[h!t]
\centering
\begin{tabular}{c|c|c}

     n  & $\| \cR_n^+ - \cR_\cA^+\|_H$  & $\| \cR_n^- - \cR_\cA^-\|_H$ \\
      \hline

      1 & 0.112941 & 0.113266  \\

      2 & 0.020430 & 0.015774   \\

      5 & 0.014207 & 0.012091  \\

      10 & 0.008460 & 0.007873  \\

      15 & 0.007008 & 0.006532  \\

\end{tabular}
\caption{Hausdorff distances between the sets $\cR_n^\pm$ and $\cR_\cA^\pm$ for increasing $n$. For larger $n$, the distances decrease, as is confirmed visually in \Fig{RC_PlaneAIExist}.}
\label{tbl:Hausdorff}
\end{table}

Since the distances in \Tbl{Hausdorff} appear to go to zero, it seems reasonable to infer the following:
\begin{con}\label{con:RegionConvergence}
As $n \to \infty$, the $\cR_n^\pm$ converges to $\cR_\cA^\pm$.
\end{con}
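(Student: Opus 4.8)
The plan is to show that the increasing union $\bigcup_{n}\cR_n^{\pm}$ exhausts the interior of $\cR_\cA^{\pm}$ and then to upgrade this to Hausdorff convergence on the compact parameter domain; throughout I take $B=[-\beta,\beta]$ as in \Eq{BInterval}. First I would record two containments. Because the supremum in \Eq{DFn} ranges over $x\in B$, and $f'_{s}$ blows up wherever the radicand of \Eq{AIMap} vanishes, the requirement $\|D\cF^n\|_\infty<1$ forces $-c\xi^2+r\xi+1>0$ on all of $B$; together with $f_{s}(B)\subset B$ this gives $f_{s}(B)\subset B\setminus\{0\}$, so $\cR_n^{+}\subseteq\cR_\cA^{+}$ for every $n$ (and likewise for the minus sets). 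Conversely, on $\cR_\cA^{\pm}$ the same positivity of the radicand keeps the derivatives bounded, so $\|D\cF^n\|_\infty<\infty$ there. Combined with the nesting $\cR_{n-1}^{\pm}\subseteq\cR_n^{\pm}$ already noted, the $\cR_n^{\pm}$ form an increasing family inside $\cR_\cA^{\pm}$.

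Next I would reduce the problem to an asymptotic contraction rate. Since the range of $\cF^m$ lies in $\cB$, the quantity $\|D\cF^n\|_\infty$ is submultiplicative, $\|D\cF^{m+n}\|_\infty\le\|D\cF^m\|_\infty\,\|D\cF^n\|_\infty$, so by Fekete's lemma the limit $\lambda(r,c):=\lim_{n\to\infty}\tfrac1n\log\|D\cF^n\|_\infty=\inf_n\tfrac1n\log\|D\cF^n\|_\infty$ exists; it is the logarithm of the joint spectral radius of the cocycle generated by $f_+,f_-$ on $B$. Unwinding the definitions, a parameter $(r,c)\in\cR_\cA^{+}$ lies in $\bigcup_n\cR_n^{+}$ if and only if $\|D\cF^n\|_\infty<1$ for some $n$, which, given submultiplicativity, holds precisely when $\lambda(r,c)<0$. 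Thus the heart of the matter is the claim that $\lambda(r,c)<0$ for every $(r,c)$ in the interior of $\cR_\cA^{+}$, and the analogous claim for $\cR_\cA^{-}$ via \Eq{BackwardsAIMap}.

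To establish this claim I would seek an adapted metric (an extremal norm): a continuous weight $\rho>0$ on $B$ with $|f'_{s}(\xi)|\,\rho(f_{s}(\xi))/\rho(\xi)\le\kappa<1$ for all $\xi\in B$, $s\in\{+,-\}$. Such a $\rho$ makes both branches genuine contractions in the metric $\rho\,d\xi$, forcing $\|D\cF^n\|_\infty\le C\kappa^n\to0$ and hence $\lambda<0$. A natural way to produce $\rho$ is to uniformize the conic \Eq{ReducedAILimit}—trigonometrically for the ellipse, hyperbolically for the hyperbola—so that the square-root branches $f_\pm$ become smoother, ideally M\"obius-like, maps in the uniformizing coordinate where the contraction is transparent. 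The main obstacle lies exactly here: one must rule out neutral behavior in the interior, i.e. show that no symbol word produces a composition with a fixed point whose derivative has modulus one (the fixed points $\xi_\pm$ of \Eq{FixedPt} are the first to check, since a single neutral cycle anywhere in $\mathrm{int}\,\cR_\cA^{+}$ would give $\lambda\ge0$ and defeat the conjecture), and one must control the degenerate boundaries—the ellipse vertices, where $|f'|$ is $0$ or $\infty$, and the loci $c=1$, $c=-r^2/4$, $c=r=0$ where \Eq{ReducedAILimit} degenerates—by choosing $\beta$ so that $B$ avoids the vertices while remaining forward invariant. I expect this uniform-contraction step, rather than any bookkeeping, to be the genuinely hard part.

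Finally, granting $\mathrm{int}\,\cR_\cA^{\pm}\subseteq\bigcup_n\cR_n^{\pm}\subseteq\cR_\cA^{\pm}$, I would conclude Hausdorff convergence as follows. The inclusion $\cR_n^{\pm}\subseteq\cR_\cA^{\pm}$ makes the one-sided distance $\sup_{x\in\cR_n^{\pm}}\mathrm{dist}(x,\cR_\cA^{\pm})$ vanish identically. For the other side, the maps $x\mapsto\mathrm{dist}(x,\cR_n^{\pm})$ are $1$-Lipschitz and decrease (by nesting) to $\mathrm{dist}(x,\overline{\bigcup_n\cR_n^{\pm}})=\mathrm{dist}(x,\cR_\cA^{\pm})$, using that $\cR_\cA^{\pm}$ equals the closure of its interior, which is evident from \Eq{RA}; since this limit is continuous and vanishes on the compact set $\cR_\cA^{\pm}$, Dini's theorem promotes the convergence to uniform, so $\sup_{x\in\cR_\cA^{\pm}}\mathrm{dist}(x,\cR_n^{\pm})\to0$. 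Together these give $\|\cR_n^{\pm}-\cR_\cA^{\pm}\|_H\to0$, consistent with the decreasing distances in \Tbl{Hausdorff}.
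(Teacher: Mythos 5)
First, a point of calibration: the statement you are addressing appears in the paper only as a \emph{conjecture}, supported by the numerically computed Hausdorff distances in \Tbl{Hausdorff} and the visual comparison of panels (a,b) with (c,d) of \Fig{RC_PlaneAIExist}; the paper offers no proof. Your proposal therefore attempts something the authors did not do, and its overall architecture is sensible: the easy inclusion $\cR_n^{\pm}\subseteq\cR_\cA^{\pm}$, the reduction via submultiplicativity of \Eq{DFn} and Fekete's lemma to the sign of a joint Lyapunov exponent $\lambda(r,c)$, and the Dini-type upgrade from ``increasing union exhausts the interior'' to Hausdorff convergence are all reasonable (modulo two caveats: $\cR_n^{\pm}$ as defined in \Eq{RnPlus} quantifies over an arbitrary $B$, so the first inclusion should be pinned to the specific interval \Eq{BInterval} used in both computations; and at degenerate parameters such as $r=0$ an infinite one-step derivative at a point where the radicand vanishes can be cancelled in the two-step composition, so the inclusion $\cR_n^{+}\subseteq\cR_\cA^{+}$ needs a little care there).

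However, the proposal has a genuine gap, and you identify it yourself: the entire content of the conjecture is the claim that $\lambda(r,c)<0$ for every $(r,c)$ in the interior of $\cR_\cA^{\pm}$, equivalently that the condition \Eq{MapsInto} alone---which constrains only the \emph{ranges} of $f_\pm$, not their derivatives---already forces every sufficiently long composition of branches to contract. Your proposed mechanism (an extremal norm, i.e.\ a weight $\rho>0$ with $|f_s'(\xi)|\,\rho(f_s(\xi))/\rho(\xi)\le\kappa<1$) is the right kind of object to look for, but you do not construct it, and nothing in the proposal rules out a parameter in the interior of $\cR_\cA^{+}$ at which some finite symbol word has a neutral or expanding cycle; a single such word would give $\lambda\ge 0$ and exclude that parameter from every $\cR_n^{+}$, falsifying the convergence. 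Near the boundary of the elliptic region \Eq{EllipseRegion}, where $B$ approaches the horizontal extent of the ellipse and one-step derivatives become arbitrarily large, it is not at all obvious that the compensating small derivatives elsewhere along the orbit win uniformly. Until that step is supplied, your argument establishes only $\bigcup_n\cR_n^{\pm}\subseteq\cR_\cA^{\pm}$ together with a clean criterion for the converse; it does not prove the statement, which is consistent with the authors' decision to leave it as a conjecture backed by \Tbl{Hausdorff} rather than a theorem.
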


Note that \Eq{MapsInto} requires the strict inequalities $f_+>0$ and $f_-<0$ so that each orbit has a uniquely defined symbol sequence, i.e., that $f_+(B) \cap f_-(B) = \emptyset$.
Since the range must be a strict subset of the domain, successive iterates give nested sets: $B \supset f_\pm(B) \supset f_\pm(f_\pm(B)) \ldots$. The AI states lie within the resulting Cantor-like sets. 

\subsection{Parabolic Case}\label{sec:Parabolic}
In this section we compute the regions \Eq{RADefine} for the parabolic case, $\Delta = 0$.
Since we assume $b=0$, this requires that $(a,c) = (0,1)$ or $(1,0)$. 
When $a=0$, the curve \Eq{ReducedAILimit} degenerates to a pair of vertical lines at the fixed points, $\xi_\pm$, \Eq{FixedPt}. 
For this case, the forward map is not defined, and one must use the backwards map, \Eq{BackwardsAIMap}, see \App{BWMapCalcs}.


For the case $c=0$, the parabola \Eq{ReducedAILimit} has a vertex at $(-\tfrac{1}{r},0)$ when $r \neq 0$ and opens in the positive (negative) $\xi_{t-1}$ direction when $r>0$ ($r<0$), recall the sketches in \Fig{RCPlane}.
When $r=0$, the parabola degenerates to a pair of horizontal lines and 
$f_\pm(\xi) = \xi_\pm$, \Eq{FixedPt}. 

Since the interval $B$ must contain the fixed points, as these are orbits with $s = \{+\}^\infty$ and $\{-\}^\infty$,
and it must contain the images $f_\mp(\xi_\pm) = -\xi_\pm$, we define $B$ using
the maximum absolute fixed point $\beta = \xi_{max}$, where
\beq{MaxFixPt}
    \xi_{max}=\max(|\xi_+|,|\xi_-|) = \tfrac{1}{2}(|r|+\sqrt{r^2+4}).
\eeq
Since each branch is monotone on $B$, it is clear that $f_\pm(B) \subset B$.
To guarantee that the radicand of \Eq{AIMap} is positive, the vertex
of the parabola must be outside $B$; i.e. $-1/r < -\beta$,
this requires $|r|<\tfrac{1}{\sqrt{2}}$.
This gives
\[
    \{(r,0): |r| <\tfrac{1}{\sqrt{2}}\} \subset \cR_\cA^+ .
\]
This is depicted in \Fig{RC_PlaneAIExist}(d) as the thicker, black line segment.

It's also easy to find the region $\cR^+_1$, where  $|f'_\pm(\xi)|<1$ 
when $\xi \in B$. Using \Eq{AIMapDeriv}, $|f'_\pm(\xi)| = 1$ at 
$
    \xi =\tfrac{r}{4}-\tfrac{1}{r},
$
this requires that this value is outside $B$, or equivalently
\begin{align*}
     16-40r^2+5r^4>0
    \implies \quad |r|< \sqrt{\tfrac15(20-8\sqrt{5})}\approx 0.6498394.
\end{align*}
Note that this segment is only $10\%$ smaller than that for $\cR_\cA^+$.
We extend this computation to compute $\cR_2^+$ in \App{ParabolicTwo}; this gives $|r| < 0.6984177$, now only about $1\%$ smaller than that for $\cR_\cA^+$.
These values agree with the numerical results of \Fig{RC_PlaneAIExist}(b) along $c=0$. For the calculations used to obtain the values in \Tbl{Hausdorff} (i.e. numerical results for $\cR_{15}^+$), the interval is $|r| \leq 0.7057789$, a mere $0.2\%$ smaller than $\cR_\cA^+$. For the figure the grid size is $2.314(10)^{-3}$ so that the next grid point is $r=0.70809$, which is outside $\cR_\cA^+$; thus the computed interval is the optimal.

\subsection{Elliptical Case}

When $0 < c < 1$, the curve \Eq{ReducedAILimit} is an ellipse centerred at \Eq{ConicCenter} and contained in the rectangle
\beq{EllipseBounds}
    \left[\tfrac{1}{2c}(r-\sqrt{r^2+4c}), \tfrac{1}{2c}(r+\sqrt{r^2+4c})\right] \times 
    \left[-\sqrt{\tfrac{r^2+4c}{4ac}},\sqrt{\tfrac{r^2+4c}{4ac}}\right] .
\eeq
There are two cases to consider. First, if the interval $[-\xi_{max},\xi_{max}]$ \Eq{MaxFixPt} 
contains the center of the ellipse \Eq{ConicCenter}, i.e., if
\beq{NotParabolicLike}
    |\xi^o| < \xi_{max},
\eeq
then, since $B$ must include the fixed points, \Eq{MapsInto} requires that
the vertical range of the ellipse must be a subset of $B$, see the sketch in \Fig{EllipticalExamples}(a).
Indeed for this case we can take
\Eq{BInterval} with $\beta$ equal to top of the rectangle \Eq{EllipseBounds}.
Requiring the range to be a strict subset of the domain gives the condition
\[
    \beta < 
        \frac{1}{2c}(-|r|+\sqrt{r^2+4c}).
\]
After some algebra, this implies that
\beq{EllipseRegion}
  \left\{ (r,c): |r| < \tfrac{2\sqrt{2}}{5} \,,\, \cC_1(r)< c < \cC_2(r) \right\} \subset \cR_A^+,
\eeq
where $\cC_{1,2}(r)$ are the smaller two roots of the cubic polynomial \Eq{EllipsePoly}.
The discriminant of this polynomial, $2^8r^2(r^2+4)^4(8-25r^2)$, is positive when $|r|<2\sqrt{2}/5$, 
and in this case it has three positive, real roots. This is the blue region in \Fig{RC_PlaneAIExist}(c) and (d).

When \Eq{NotParabolicLike} is not satisfied, the region defined by the fixed points does not contain the top or bottom vertices of the ellipse;
thus the slope of $f_+$ in the set $[-\xi_{max},\xi_{max}]$, \Eq{MaxFixPt}, is always positive.
An example is shown in \Fig{EllipticalExamples}(b).
For this case, as for the parabola, we can use $\beta = \xi_{max}$ for the interval \Eq{BInterval}. 
In order that $ \{0\} \notin f_\pm(B)$,
the horizontal vertices of the ellipse must be outside $B$; this gives the condition
\beq{OutlyingCondition}
    \frac{-|r|+\sqrt{r^2+4c}}{2c} > \frac{1}{2}(|r|+\sqrt{r^2+4}).
\eeq
Some more algebra then gives the additional region
\[
   \left\{(r,c): \tfrac{2}{\sqrt{15}}<|r|<\tfrac{1}{\sqrt{2}} \,,\, 
         0 < c <1+|r|(|r|-\sqrt{r^2+4}) \right\} \subset \cR_A^+ . 
\]
In \Fig{RC_PlaneAIExist} (c) and (d), 
this additional portion in $0<c<1$ is colored purple. The backwards case, $\cR_A^-$, is treated in \App{BWMapCalcs}, and results in the tan region in \Fig{RC_PlaneAIExist} (c) and (d).

\InsertFig{EllipticExamples}{Elliptical curves for parameters (a) $(r,c)=(0.1,0.3)$ and (b) $(r,c)=(0.525,0.15)$. The box $B^2$ is shown in green, and the diagonals $\xi_t=\pm\xi_{t-1}$ are dashed blue.}{EllipticalExamples}{0.75}



\subsection{Hyperbolic Case}\label{sec:Hyperbola}

For the hyperbolic case $\Delta < 0$. Since $a=1-c$ this implies that $c<0$ or $c>1$.
Supposing first that
\beq{HyperbolaRegion}
c< -\tfrac{r^2}{4},
\eeq
then the branches of the hyperbola are graphs over $\xi_{t-1}$, see the sketch in \Fig{HyperbolicExamples}(a). 
In this case, we can take $B = [-\xi_{max},\xi_{max}]$, defined by \Eq{MaxFixPt}. The images of $B$ then satisfy \Eq{MapsInto}.
The implication is that
\[
   \left\{(r,c) :  c <-\tfrac{r^2}{4} \right\} \subset \cR_\cA^+.
\]
This region is pictured in green in \Fig{RC_PlaneAIExist}(c) and (d).


In addition, note that the asymptotes of the hyperbola are the lines
\beq{Asymptotes}
    \xi_t = \pm \sqrt{\frac{c}{c-1}} \left(\xi_{t-1} - \frac{r}{2c}\right).
\eeq
Consequently when $c < 0$ the magnitude of these slopes is less than one. 
Moreover, since \Eq{HyperbolaRegion} implies that the branches are graphs over $\xi_{t-1}$ these slopes bound those of $f_\pm$ on $B$.
Thus, by \Lem{AIGeneralContraction} for $n=1$, there is a one-to-one correspondence between  AI states  and symbol sequences whenever
\Eq{HyperbolaRegion} is satisfied. 

\InsertFig{HyperbolicExamples}{
Hyperbolic curves for parameters
(a) $(r,c)=(0.5,-1)$,
(b) $(r,c)=(-0.65,-0.105)$. The box $B^2$ is shown in green, and the diagonals $\xi_t=\pm\xi_{t-1}$ are dashed blue.}{HyperbolicExamples}{0.75}

When $ -r^4/4 < c < 0$, the branches of hyperbola are graphs over $\xi_t$, as sketched in  \Fig{HyperbolicExamples}(b).
In this case, the domain omits the open interval 
\[
  \left(\frac{1}{2c}(r-\sqrt{r^2+4c}),\, \frac{1}{2c}(r+\sqrt{r^2+4c})\right),
\]
between the vertices. This set must be disjoint from $B$ in order that the maps
$f_\pm$ be well-defined. The vertices are outside $[-\xi_{max},\xi_{max}]$ under
the same condition \Eq{OutlyingCondition} found in the elliptic case.
This implies that
\[
    \left\{ (r,c) : \tfrac{1}{\sqrt{2}} \leq |r| \leq \tfrac{2}{\sqrt{3}} ,\,
    -\tfrac{r^2}{4}<c<1 + |r|(|r|-\sqrt{r^2+4}) \right\} \subset \cR_\cA^+.
\]
This region is pictured in purple for $c<0$ in \Fig{RC_PlaneAIExist}(a) and (d).

Note that condition \Eq{OutlyingCondition} is never achieved when $c>1$; a subset of this case becomes $\cR_A^-$ and is treated in \App{BWMapCalcs}.


\section{Strongly Contracting Case}\label{sec:SCcase}

The map \Eq{Q3DMap} is volume preserving when $\delta = 1$, and projects to a two-dimensional \hen map in $(x,y)$  when $\delta = 0$.\footnote
{For example, the classic \hen attractor is found when $(\alpha,\sigma, \delta)=(-1.4,-0.3,0)$, or equivalently $(\eps, r) \approx (0.84515,-0.2535)$
\cite[Appendix] {Hampton22}.}
In this section,  we study orbits for a ``strongly contracting case'', setting
\begin{enumerate}
   \myitem[(SC)]\label{SC} Strongly Contracting:  $(a,c,\delta)=(1,0,0.05)$.
\end{enumerate}
We studied this case previously in \cite{Hampton22} for $r=0$;
this corresponds to the AI limit $\alpha \to -\infty$, with $\sigma$ finite.  
There we found a number of periodic attractors, many of which undergo period-doubling bifurcations as $\alpha$ decreases.
 We also found chaotic attractors with a 3D horseshoe-like structure, reminiscent of the \hen attractor.

Here we generalize these results to look at the effect of $r \neq 0$. 
Note that since  $a = 1$ and $c = 0$, the quadratic curve at the AI limit is a parabola. When $|r| < 2^{-1/2}$, the results of \Sec{Parabolic} imply that there is a one-to-one correspondence between symbol sequences and AI states for this case.


Characteristics of the attractors for \ref{SC} over a region in $(\alpha,r)$ are shown in \Fig{SCTongue}.
These are found by iterating the difference equation \Eq{VPDiff}, 
upon setting $\sigma = r \eps^{-1} = r\sqrt{-\alpha}$ under the assumption that $\alpha < 0$.
For each $(\alpha,r)$ on a $1500^2$ grid,  we choose the initial point 
$(x_0,x_{-1},x_{-2}) = (x_-,x_-,x_-) + (0.001,0,0)$, where
\[
    x_-=\tfrac{1}{2}\left(1+\sigma-\delta-\sqrt{(1+\sigma-\delta)^2 -4\alpha} \right),
\]
is a fixed point of \Eq{VPDiff} This point is iterated forward $T = 5000$ times to eliminate transients. An orbit is declared divergent if, for some $t \le T$, $|x_t| > \kappa_{max}$\footnote
{
In \cite{Hampton22b}, it was proved that all bounded orbits lie in the set 
$|x| <\kappa$ for a given $\kappa>0$ depending upon the map parameters.
We take $\kappa_{max}$ to be the maximum value of $\kappa$ over the parameters studied.
}
, for $\kappa_{max}=3.26724$ in \Fig{SCTongue}(a), and $\kappa_{max}=2.28343$ in \Fig{SCTongue}(b).
Such divergent cases are colored white in \Fig{SCTongue}.
If the orbit remains bounded, we detect low periods by iterating up to $90$ more steps, 
checking for a close return: the approximate period, $p$, is the smallest time for which
\[
    \|x_{T+p} - x_T\| < 10^{-4}.
\]
Orbits with these periods are colored according to the color map shown in \Fig{SCTongue}. For the remaining parameter values
we label the orbit as regular (black) or chaotic (grey) by computing the approximate maximal Lyapunov exponent as described in \cite{Hampton22b}. 
Thus, the colors of \Fig{SCTongue} indicate the type of attractor expected for each $(\alpha,r)$. 

\InsertFig{SCwrTongue}
{(a) Bounded, periodic and chaotic orbits for a strongly contracting case of map \Eq{Q3DMap} with parameters \ref{SC}.
The color scale indicates the period with chaotic orbits colored gray and unbounded orbits white. If the orbit is
bounded, not chaotic, but not identified as having period at most $90$, the point is black. Also pictured is a line segment at $r=-0.18$. (b) An enlargement of the boxed region in (a) around the period-five shrimp.}{SCTongue}{1.1}

Note that since we study only a single initial condition, we cannot rule out the appearance of multiple attractors, nor the existence of attractors that might occur for other initial points in the ``undbounded'' region of the figure. Indeed, we show in \Fig{SCTongueDifferentIC} the same parameter range as \Fig{SCTongue}(a), but now choosing the initial point $(x_0,x_{-1},x_{-2}) = (0.0125839,0.677585,-1.25765)$. Note that there is a striking absence of the stable fixed point and its doubling sequence for $r > 0$ and $\alpha$ small as compared to \Fig{SCTongue}. Moreover there is a new stable region (gold) corresponding to a period-$3$ attractor along with a small doubling-cascade to periods 6 and 12.

\InsertFig{SCTongueDifferentIC}{Bounded, periodic and chaotic orbits for case \ref{SC} over the same parameter range as \Fig{SCTongue}(a), but using a different initial condition. The new gold, period-3 attracting region not seen in \Fig{SCTongue}(a) shows the possibility of multiple attractors.}{SCTongueDifferentIC}{0.55}

Figure \ref{fig:SCTongue}(a) shows the range $(\alpha, r) \in [-3,0] \times [-0.75,0.5]$. The fixed point $x_-$ is stable in the ``strong blue'' region, and undergoes a doubling bifurcation along the curve
\beq{DoublingCurve}
    (3r^{2}-4)^{2} \alpha^{2}+2 \left((5\delta^{2} +6\delta + 9)r^{2}-4 \delta^{2}+8 \delta +12\right) \alpha +\left(\delta +1\right)^{2} \left(\delta -3\right)^{2} = 0,
\eeq
to become a stable period-two orbit (vivid orange). Subsequent doublings as $\alpha$ decreases create period-four (magenta) and period-eight (red) orbits.

An enlargement of the boxed region in \Fig{SCTongue}(a) is shown in \Fig{SCTongue}(b) for $(\alpha,r) \in [-1.7,-0.9] \times [-0.375,0]$.
Prominent features in this region are resonant ``shrimps'' including the period-five (dark green) and period-seven (dark red) cases. A shrimp is a codimension two structure much studied in two-parameter families of one and two-dimensional maps \cite{Gallas94,MacKay87f,Facanha13}. Within a period-$n$ shrimp, there is an attracting period-$n$ orbit and a partnered period-$n$ saddle. The ``head'' of the shrimp corresponds to a pair of curves of saddle-node bifurcations and the ``tail'' to sequences of period-doubling bifurcations. The endoskeleton of the shrimp is near the curve where the trace of the Jacobian is zero \cite{Facanha13}. As can be seen in the figure, shrimps swim in a ``sea of chaos''. 


To study the bifurcations in more detail, we set $r=-0.18$, seen as a line segment in \Fig{SCTongue}. This segment enters the region of bounded orbits at $\alpha=-1.541$, crosses the dark green, period-five shrimp when $-1.480\leq\alpha\leq-1.381$, and enters the doubling cascade of the fixed point at $\alpha=-1.2031$. In the following subsections, we will continue periodic orbits from $\eps = 0$ along this line using
the numerical continuation algorithm discussed in \App{NumericalContinuation}.

\subsection{Low-Period Orbits and the Period-Doubling Cascade}\label{sec:LowPeriod}

There are 23 possible periodic symbol sequences with periods $p \le 6$. 
Note that by \Sec{Parabolic}, for the case \ref{SC}, when $|r| < 2^{-1/2}$
there is a one-to-one correspondence between symbol sequences and AI states.
The AI states can be easily found by iteration from an arbitrary point in the interval $B$, 
since the maps $f_s$ \Eq{AIMap} are contracting in this case.
In this section we will continue each of these AI states for $r = -0.18$.

We expect that at least one period-$n$ AI state will continue to the $\alpha$ range where there is a stable period-$n$ orbit  seen in \Fig{SCTongue}. This is in alignment with the ``no-bubbles'' conjecture proposed in \cite{Sterling99}: every orbit of the map is continuously connected to the AI limit. 
Since the attractors in \Fig{SCTongue} are found with a specific initial condition, it is certainly possible that AI states continue beyond the $\alpha$ range in the figure: 
there could be multiple attractors with basins that may or may not contain our chosen initial point. 

The results of the continuation for orbits up to $p=5$ are shown in a bifurcation diagram, projected onto the $\xi_t$-axis in \Fig{BifDiagPds1thru5}(a), with each periodic orbit labelled as its symbol sequence, given in the legend of the figure.
Note that the fixed points $\{\pm\}^\infty$ extend for the entire $\eps$ range shown in the diagram, but all of the higher-period orbits are destroyed by $\eps = 1.4$; the final observed bifurcation is the reverse period-doubling that
destroys the period-two orbit $\{-+\}^\infty$ when it collides with $\{-\}^\infty$. Notice that the AI states themselves, i.e., the orbits at $\eps=0$ all lie on a set of finite points, or some Cantor-like set, which is in agreement with the intuition of \Con{RegionConvergence} discussed above.

\InsertFigTwo{BifDiagPds1thru5}{LowPd2D}{(a) Bifurcation diagram for low-period orbits up to period-five for case \ref{SC} along $r=-0.18$. (b) 2D projection of orbits from (a) for the continuation step  closest to $\eps=0.8$. Symbols for the orbits are the same as in (a). Included is the AI limit curve \Eq{ReducedAILimit} (blue), which is a parabola for this case, and the diagonal (red) for reference.}
{BifDiagPds1thru5}{0.5}

\begin{table}[h!t]
\centering
\begin{tabular}{c|c|c|c|c}
    Parent & Type & Child & $\eps$ & $\alpha$ \\
    \hline
    $\{-\}^\infty$ & pd
    & $\{-+\}^\infty$  & 1.3136 & -0.5795  \\ 
    \hline
     $\{-+\}^\infty$ & pd & $\{---+\}^\infty$  & 0.9639 & -1.0764   \\ 
    \hline
    $\{-++\}^\infty$ & pd & $\{--++-+\}^\infty$  & 0.6478 & -2.3843  \\
    \hline
    $\{---+\}^\infty$ & pd & $\{---+-+-+\}^\infty$  & 0.9220 & -1.1763   \\ 
    \hline
     & sn & $\{-\pm+\}^\infty$   & 0.6492 & -2.3723  \\
    \hline
     & sn & $\{-\pm++\}^\infty$ & 0.6002 & -2.7762  \\ 
    \hline
     & sn & ${\{---\pm+\}^\infty}$ & 0.8510& -1.3808  \\  
    \hline
     & sn & ${\{-+-+\pm\}^\infty}$  & 0.7473& -1.7908   \\
   \hline
     & sn & $\{-\pm+++\}^\infty$  & 0.6017& -2.7622   \\
    \hline
      & sn & $\{---\pm-+\}^\infty$ & 0.9266& -1.1648   \\
    \hline 
     & sn & $\{---\pm++\}^\infty$ &  0.8007& -1.5597  \\
     \hline
     & sn & $\{-+-++\pm\}^\infty$ & 0.6889& -2.1070   \\
      \hline
     & sn & $\{-\pm++++\}^\infty$ & 0.6005& -2.7734   \\

\end{tabular}
\caption{Parameters $\eps$ and $\alpha=-\eps^{-2}$ for period-doubling (pd) and saddle-node (sn) bifurcations for all orbits up to period six for
the case \ref{SC} with $r=-0.18$. Orbits are identified by their symbol sequences in the first and third columns. For saddle-node bifurcations, the symbol sequences of the two colliding orbits are listed together: the $\pm$ indicates the single symbol that differs.}
\label{tbl:LowPdBifTab}
\end{table}

The period-doubling and saddled-node bifurcations that are found by this process are summarized in \Tbl{LowPdBifTab}.
The fourth column of the table gives a $5$-digit estimate of the bifurcation value, from the continuation 
algorithm. Analytically, using \Eq{DoublingCurve} for $r = -0.18$ 
the fixed point doubles at $\alpha_{PD}=-0.579494815477836$. The
computations continue to  $\alpha=-0.5794956$, which differs by $8(10)^{-7}$.
Since we have not used a bifurcation detection criterion in the continuation algorithm,
we do \textit{not} expect high accuracy for these values. 
Moreover, detecting bifurcations using multipliers can be problematic for $\delta = 0.05$:
since the product of the multipliers of the
linearization of \Eq{Q3DMap} is its Jacobian determinant,
then for a period-$n$ orbit, $\lambda_1\lambda_2\lambda_3=\delta^n$.
Implementation of such a detection criterion to compute $\lambda = 1$ or $-1$ requires high precision computations as $n$ grows.

Note that in each case shown in \Tbl{LowPdBifTab}, the codimension-one bifurcations occur between orbits with exactly one differing symbol. This agrees with our previous conjecture in \cite{Hampton22}.

Both of the period-three orbits are destroyed by saddle-node bifurcations below the onset of the bounded attractors of \Fig{SCTongue}, at $(\alpha,r)= (-1.541,-0.18)$.  This agrees with the absence of period-three attractors along the line $r = -0.18$. Two of the three period-four orbits are similarly destroyed in a saddle-node, 
at $\alpha = -2.7762$, before the onset of bounded attractors.
The remaining period-four orbit, $\{---+\}^\infty$, becomes stable in the magenta region of \Fig{SCTongue}, and is destroyed by a (reverse) period-doubling of the $\{ -+ \}^\infty$ orbit at $\alpha = -1.0764$.
Only two of the period-five orbits continue into the bounded region, $\{---\pm+\}^\infty$; these are the orbits that form
the period-five shimp in \Fig{SCTongue}. The attracting orbit has the sequence $\{---++\}^\infty$; the other, $\{----+\}^\infty$, is a saddle. These orbits collide in a saddle-node bifurcation just as the line segment exits the shrimp at $\alpha=-1.3808$. 

The pair $\{---\pm-+\}^\infty$ of period-six orbits continues into the bounded orbit region and enters the period-doubling cascade of the fixed point. In the the thin, soft blue strip seen in \Fig{SCTongue}, the orbit $\{---+-+\}^\infty$ is stable. In this region there is also an attracting period four orbit: this is a case of multiple attractors. 

As \Fig{SCTongue} indicates, the fixed point $\{-\}^\infty$ undergoes a period doubling cascade as $\alpha$ decreases; 
the first doublings correspond to the symbol sequences shown in \Tbl{DoublingSequences}.
These have a simple pattern if the sequences are properly ordered (as in \Tbl{DoublingSequences}): to get the sequence of period $2^{n+1}$, simply double that for period-$2^n$ and flip the first sign.
This pattern is related  to that found in \cite{Hao91} for one-dimensional maps. 
This pattern also seems to hold for the doubling of the period-3 orbit, as seen in the third row of \Tbl{LowPdBifTab}, if the period-3 orbit is first written as $(+-+)$. This pattern was verified for a finite number of steps. Below we propose a formal conjecture.

\begin{table}[h!t]
\centering
\begin{tabular}{r|l}
Period & Sequence \\
\hline
1 & $\{-\}^\infty$ \\
2 & $\{+(-)\}^\infty$ \\
4 &  $\{--(+-)\}^\infty$ \\
8 &  $\{+-+-(--+-)\}^\infty$\\
16 &  $\{--+---+-(+-+---+-)\}^\infty$ \\
32 &  $\{+-+---+-+-+---+-(--+---+-+-+---+-)\}^\infty$\\
\end{tabular}
\caption{Symbol sequences for the period-doubling cascade of the fixed point $\xi_-$.}
\label{tbl:DoublingSequences}
\end{table}

\begin{con}\label{con:PdDoubling}
Symbol sequences between period-doubled periodic orbits of the map \Eq{Q3DMap} have exactly one differing symbol. When written appropriately, the sequence of the doubled orbit is simply double that of the original orbit with the first sign flipped.
\end{con}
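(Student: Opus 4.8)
The plan is to prove the two assertions of \Con{PdDoubling} in turn: first the combinatorial doubling rule $w_{2n}=\mu(w_n)\,w_n$ (where $w_n$ is the canonical length-$n$ block for the period-$n$ orbit, $w_n w_n$ is its length-$2n$ repetition, and $\mu$ flips the first symbol), and then the ``exactly one differing symbol'' statement, which is an immediate corollary once the rule is in hand, since $w_n w_n$ and $\mu(w_n)\,w_n$ agree in every position except the first. The central idea is to treat the symbol sequence as a continuation invariant and to localize the single sign change. Because the symbol sequence is assigned at the AI limit and $s_t=\sign(\xi_t)$ can only change along a continuation branch when some coordinate $\xi_t$ passes through zero, I would continue the period-$2n$ orbit from the period-doubling bifurcation at $\eps=\eps^\ast$, where it is born, down to $\eps=0$, and then count and locate the zero-crossings of its coordinates along the way.

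At the bifurcation the period-$2n$ orbit coincides with the period-$n$ orbit traced twice, and since every coordinate of an AI state is bounded away from $0$ (the radicand of \Eq{AIMap} is strictly positive on $B$, by the construction in \Lem{AIGeneralContraction}), near $\eps^\ast$ the period-$2n$ orbit inherits the signs $w_n w_n$. The content of the conjecture is then that precisely one coordinate changes sign as $\eps$ decreases to $0$. To identify that coordinate I would exploit the local period-doubling structure: the critical eigenvector is antiperiodic, $v_{t+n}=-v_t$, so to leading order the two halves of the period-$2n$ orbit lie on opposite sides of the period-$n$ orbit, and the coordinate that eventually crosses zero is the one where the period-$n$ value is smallest in magnitude and the antiperiodic mode drives it across.

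Combined with the odd symmetry $f_-(\xi)=-f_+(\xi)$ of the branches \Eq{AIMap}, this antiperiodic structure is exactly the mechanism behind the Metropolis--Stein--Stein harmonic (or $\ast$-product) rule for period-doubling itineraries of one-dimensional maps, to which \cite{Hao91} alludes. In the strongly contracting regime \ref{SC}, where $\delta$ is small and the dynamics is close to that of a one-dimensional map, I would reduce $L^n$ to a one-dimensional return map by a center-manifold reduction at $\eps^\ast$, match the AI symbols to the kneading letters, and invoke the harmonic construction; because $f_-=-f_+$ the parity-dependent flip of the harmonic collapses to ``flip the first sign,'' yielding $w_{2n}=\mu(w_n)\,w_n$ after fixing the cyclic representative (the ``written appropriately'' clause).

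The hard part will be upgrading this local, leading-order picture to a rigorous global statement: controlling the continuation branch all the way from $\eps^\ast$ to the AI limit and proving that exactly one coordinate crosses zero, at the first symbol in the chosen alignment. Since the symbol sequence is invariant only between zero-crossings, ruling out additional crossings requires global control of the branch of the kind currently supplied only by the numerical continuation of \App{NumericalContinuation}. A related obstruction is that the ``one differing symbol for codimension-one bifurcations'' property is itself only conjectured in \cite{Hampton22}, so a clean proof of \Con{PdDoubling} must either establish that property in the period-doubling case as part of the argument or assume it as a hypothesis. Finally, the one-dimensional reduction is cleanest only for small $|\delta|$; extending to general $\delta$ with $|\delta|\le 1$ would require replacing kneading theory by a direct analysis of the antiperiodic mode together with the sign of the period-$n$ multiplier product $\prod_t f'_{s_t}(\xi_{t-1})$.
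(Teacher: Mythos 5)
The first thing to say is that the paper does not prove this statement: it is posed as \Con{PdDoubling} precisely because the authors could only verify it numerically, by continuing the doubling cascade of the fixed point (\Tbl{DoublingSequences}) and of the period-three orbit ``for a finite number of steps.'' So there is no proof in the paper to compare against; what you have written is a research plan for an open problem, and it should be judged as such.

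As a plan, your identification of the mechanism is the right one --- the symbol sequence $s_t=\sign(\xi_t)$ can only change along a continuation branch at a zero-crossing of some coordinate, the critical eigenvector at a period-doubling is antiperiodic, and the combinatorial rule $w_{2n}=\mu(w_n)\,w_n$ is exactly what \Tbl{DoublingSequences} exhibits and is the natural analogue of the Metropolis--Stein--Stein harmonic to which the paper's citation of \cite{Hao91} points. But the gaps you flag are the whole problem, and there are two you understate. First, your claim that near $\eps^\ast$ the period-$2n$ orbit ``inherits the signs $w_n w_n$'' presumes that no coordinate of the parent period-$n$ orbit is near zero at the bifurcation; the positivity of the radicand of \Eq{AIMap} on $B$ is a statement about the AI limit $\eps=0$, not about the orbit at $\eps^\ast$, so even the starting point of your zero-crossing count is unjustified. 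Second, and more fundamentally, counting zero-crossings requires following the branch globally from $\eps^\ast$ down to $0$ and excluding extra crossings and turning points; nothing in \Lem{AIGeneralContraction} or the contraction arguments of the paper gives control of the branch away from a neighborhood of $\eps=0$, and the center-manifold/kneading reduction you invoke is local to the bifurcation and to small $|\delta|$, so it cannot by itself bridge the interval in $\eps$. Your heuristic for \emph{which} coordinate flips (``the one where the period-$n$ value is smallest in magnitude'') is also not an argument. In short: the proposal is a plausible and well-motivated strategy, consistent with the evidence in the paper, but it is not a proof, and the conjecture remains open.
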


In \Fig{BifDiagPds1thru5}(b), the six orbits with $p\le 5$ that exist at $\eps =0.8$ are projected onto the $(\xi_{t-1},\xi_{t})$ plane. For these parameters, the line segment $r=-0.18$ has not yet entered the region of bounded attractors. Nevertheless this figure indicates that the low-period orbits appear to trace a horseshoe-like structure reminiscent of the \hen attractor.

\subsection{H\'{e}non-like Attractors}\label{sec:HenonAttractors}

When $r=-0.18$, chaotic attractors are indeed found when 
$\alpha$ is in the gray regions of \Fig{SCTongue}, just outside the period-five shrimp.
One such  attractor with $\alpha=-1.25$ is shown in \Fig{ChaoticAttractors_ContSoln}(a).
Indeed, this is a chaotic attractor since, using the method described in \cite{Hampton22b}, its maximal Lyapunov exponent is $1.30116$. To understand how this attractor develops, we first find a periodic approximation by iterating the point $(-1.3387,-0.2563,-0.9553)$ on the attractor using map \Eq{Q3DMap} until it exhibits a close return---within a distance of $0.005$. The first three such close return times are listed in \Tbl{ChaoticAttractors}.

\begin{table}[h!t]
\centering
\begin{tabular}{r|c|c|c}

     period & return distance & $\eps$ & $\alpha$ \\
    \hline
    273 & $7.96(10)^{-5}$ & 0.894 & -1.250 \\
    423 & $2.4(10)^{-3}$ & 0.894 & -1.252 \\
    1200 & $4.0(10)^{-5}$ &  0.893 & -1.255 \\

    
\end{tabular}
\caption{Approximate periods and return distances for orbits on the chaotic attractor for 
$(\alpha,r) = (-1.250,-0.18)$. The last two columns show bifurcation values for orbits
continued from the AI limit using the inferred periodic symbol sequences.}
\label{tbl:ChaoticAttractors}
\end{table}

Using these orbits we can construct an associated symbol sequence under the assumption that
\[
    s_t=\sign{(\xi_t)},
\]
as it would be needed to use \Eq{AIMap} at the AI limit.
Thus each periodic approximation has a corresponding periodic symbol sequence, which we use to find an AI state. These are then continued away from $\eps = 0$, again using the method described in \App{NumericalContinuation}.
The resulting orbits persist up to the values of $\eps$ shown in column three of \Tbl{ChaoticAttractors}.
These orbits are shown at these maximal $\eps$-values in \Fig{ChaoticAttractors_ContSoln}(b,c,d).
The period-$273$ approximation to the chaotic attractor persists the longest, reaching $\alpha = -1.250$. The two longer periodic approximations do not continue as far, even though the period-$1200$ orbit has a smaller return distance. In \Fig{MoviePanels}, there are six panels that follow the continuation solution over increasing $\eps$ of the period-1200 orbit (black) in $(\xi_{t-1},\xi_t,\xi_{t+1})$-space. Also shown are the attractors (blue) as detected by the algorithm used to create \Fig{SCTongue}. In \Fig{MoviePanels}(a), the period-1200 orbit is at the AI limit; it lies on a Cantor-like set. In \Fig{MoviePanels}(b,c), $\eps$ has not yet reached the region of bounded attractors, and the period-1200 orbit continues to evolve by growing apart and onto a folded structure. Only \Fig{MoviePanels}(d) shows an instance where both the continued period-1200 orbit and an attractor co-exist; they appear to cover the same invariant set. For slightly larger $\eps$,
the period-$1200$ orbit is destroyed and, as seen in \Fig{MoviePanels}(e), the attractor has split into two chaotic bands. Effectively, these  have ``(reverse) merged'' from the chaotic attractor that was seen in panel (d). 
These bands subsequently collapse in a (reverse) period-doubling cascade; the period-two case is shown in panel \Fig{MoviePanels}(f).

For reference we give the period-273 symbol sequence:
\begin{align*}
\{&(-(+-)^4-^2(+-)^6-^2(+-)^6-^2(+-)^2-^2(+-)^3-^2(+-)^2-^2(+-)^7 -^2(+-)^9-^2(+-)^2-^2\\
&(+-)^4-^2(+-)^2-^2(+-)^2-^2 (+-)^2-^4+-^3(+-)^2-^7(+-^3)^8(+-)^3-^2(+-)^5-^2\\
&(+-)^2-^2(+-)^3+-^3(+-)^2-^2(+-)^3-^2(+-)^5-^2(+-)^2-^2(+-)^3-^2(+-)^2-^2(+-)^5-
\}
\end{align*}
It is interesting that each subsequence of 
this orbit can be seen to be one of the sequences along the doubling cascade of the fixed point, i.e., from the list in \Tbl{DoublingSequences}. 
For example, in condensed form, the period-four orbit can be represented by either $\{(+-)-^2\}^\infty$ or $\{+-^3\}^\infty$, the period-eight orbit as $\{(+-)^3-^2\}^\infty$, etc. Chaotic attractors are often found after period-doubling cascades, so it seems apt that the symbolic sequence associated with a chaotic attractor is made up of such subsequences. It is interesting to speculate that one could predict the symbol sequence of such a chaotic attractor without a formal calculation of $s_t$.

\InsertFigOneThree{ChaoticAttractorXi}{ChaoticAttractor273_3DContSoln}{ChaoticAttractor423_3DContSoln}{ChaoticAttractor1200_3DContSoln}{(a) The chaotic attractor at $(\alpha,r)=(-1.25,-0.18)$ plotted in the rescaled spatial coordinates $\xi=\eps x$. Continuation results for the period (b) 273, (c) 423, and (d) 1200 orbits that are obtained by close returns on the chaotic attractor. These are shown for $\eps$ given in the last column of \Tbl{ChaoticAttractors}.}{ChaoticAttractors_ContSoln}{0.32}{0.6}

\InsertFigSix{MovieSnapShot1}{MovieSnapShot2}{MovieSnapShot3}{MovieSnapShot4}{MovieSnapShot5}{MovieSnapShot6}{The continuation of the period-$1200$ orbit for six values of $\eps$ (black) in $(\xi_{t-1},\xi_t,\xi_{t+1})$-space. Also shown are the attractors (blue) for the parameters with bounded orbits in \Fig{SCTongue}. 
(a,b,c) The periodic orbit is unstable, before it enters the region of bounded orbits. 
(d) For $\eps = 0.866$, the periodic orbit nearly coincides with a chaotic attractor. 
(e) Two chaotic bands that have (reverse) merged from the chaotic attractor seen in (d). (f) For larger $\eps$, the attractor goes through a (reverse) period-doubling cascade, two examples of which are shown here. 
A movie showing these results as $\eps$ varies can be seen at \href{https://drive.google.com/file/d/1THM2Cm8rUjzD56W6ShEtICK3njUpwOdv/view?usp=sharing}{here}.}{MoviePanels}{0.45}

\section{Conclusions}

In this paper we analyzed a two-parameter AI limit for the 3D quadratic diffeomorphism \Eq{Q3DMap},
extending our previous one-parameter results \cite{Hampton22}.
To obtain an AI limit, we now assume that both $\alpha$ and $\sigma$ tend to $\infty$, adding the ratio
$\sigma^2/\alpha =-r^2$ as an additional parameter; our previous results assumed that $\sigma$ remained finite so that only the single parameter $\alpha$ went to infinity.
In this new limit, the  AI states are still determined by a one-dimensional correspondence \Eq{ReducedAILimit},
but the added parameter $r$ shifts the center of the quadratic curve. 
To our knowledge, all previous studies of AI limits have been single parameter limits. 
There is still much to learn about how the different classes of AI limits---sending different
parameters to infinity---transition from one to the other as parameters vary.

In \Sec{Existence}, we generalized a result from \cite{Hampton22} to this case, obtaining a criterion for a one-to-one correspondence
between symbol sequences and AI state, \Lem{AIGeneralContraction}. Numerically computed parameter regions that satisfy the hypotheses of this lemma were found in \Sec{NumericalBounds}. We observed that these regions seem to converge onto a region $\cR_\cA$---simply defined by \Eq{MapsInto} and found in \Sec{AnalyticalBounds}---that \textit{can} be found analytically, recall \Eq{RA}.

In \Sec{SCcase} we used numerical continuation from an AI state to obtain orbits of \Eq{Q3DMap} 
as $\eps = 1/\sqrt{ -\alpha}$ grows from zero. 
For the case that we study, $b=c=0$ in \Eq{Q3DMap}, so that the resulting diffeomorphism has just one quadratic term---it can be thought of as a 3D version of H\'enon's quadratic map.  We chose the Jacobian of the map to be small, $\delta = 0.05$, so that the map is strongly volume-contracting. Thus this map is ``close'' to H\'enon's 2D map.

We studied the attractors of the map by looking at
the trajectory of an initial condition near a fixed point of \Eq{Q3DMap}.
When this trajectory remains bounded, it can limit to a periodic or chaotic attractor,
and these were classified in \Fig{SCTongue} over a range of $\alpha$ and $r$. 
We showed similar figures for related parameter scans in \cite{Hampton22b}. 
We observed that the low-period orbits that are stable in the ``Arnold-tongues'' of \Fig{SCTongue} correspond to orbits that are connected to those at the AI limit. We followed all the AI states up to period six,
finding their codimension-one bifurcations in \Sec{LowPeriod}.
These results are consistent with the, still unproven, ``no-bubbles'' conjecture for the 2D \hen map \cite{Sterling98}. Moreover, we observe that codimension-one bifurcations
occur between orbits whose AI sequences differ in exactly one symbol;
this is consistent with our previous results in \cite{Hampton22}.
Similarly, we conjecture that the symbol sequences of orbits arising from period-doubling
can be obtained from the parent orbit by doubling the sequence and then flipping exactly one symbol---the first, when ordered appropriately. 

A deficit of our construction of \Fig{SCTongue} is that we followed only the fate of a single intial condition. 
However the map \Eq{Q3DMap} can certainly have multiple attractors for fixed parameters.
We observed this, for example, for a pair of period-six orbits that we followed from the AI limit:
these continued into the period-doubling cascade of the fixed point, becoming stable in a parameter
domain where there is also a stable period-four orbit. This is also seen in \Fig{SCTongueDifferentIC}, which uses a different initial condition than that of \Fig{SCTongue}(a), and clearly exhibits the existence different periodic attractors.
One could speculate that continuation from the anti-integrable limit may be an efficient method to multiple
attractors and those with small basins of attraction, as well as to find unstable orbits. 
We hope to explore this approach to explore multiple attractors in future research.

Note that even though we only detected orbits up to period $80$ in \Fig{SCTongue}, we did not find 
parameter values for which there is an attracting periodic orbit with larger period.
This contrasts with some of the similar parameter scans
in our previous work \cite{Hampton22b}. We also do not observe attracting invariant circles for this
strongly contracting case---these were seen for larger $\delta$ in \cite{Hampton22, Hampton22b}.
In the future, we hope to further investigate the symbol sequences for AI states that evolve to invariant circles. 
In particular, it would be interesting to see if there is a similar relation between symbols 
and rotation numbers as that found in the 2D \hen case \cite{Dullin05}. Similarly, for the volume-preserving case, one could ask if there is a correspondence between symbols and the rotation vector for invariant tori.

\newpage
\appendix
\section{Backwards Map}\label{app:BWMapCalcs}
Here we compute the region $\cR_\cA^-$ of \Sec{AnalyticalBounds}, imposing the condition \Eq{MapsInto} for the backwards AI map \Eq{BackwardsAIMap}. 


First suppose that $\Delta = -4ac = 0$. 
For the special case $c=0$, the backwards map reduces to $\xi_{t-1} = \tfrac{1}{r} (\xi_t^2-1)$, a simple, deterministic, 1D map: there is no nontrivial AI limit. Thus when $\Delta=0$, only the case $1-c = a=0$, where the curve \Eq{ReducedAILimit} is a pair of parallel lines,
\[
    \xi_{t-1} = -\tfrac12 (r +s_t \sqrt{r^2+4}),
\]
gives a nontrivial AI limit. In this case our arguments do apply using $B$ \Eq{BInterval} with $\beta = \xi_{max}$ \Eq{MaxFixPt}. The implication is that
\[
    \{(r,c):  c=1\} \subset \cR_\cA^{-},
\]
shown as the red line in \Fig{RC_PlaneAIExist}(c).

For the ellipse, $\Delta<0$, the map $g_\pm$ has domain given by the vertical bounds of the rectangle \Eq{EllipseBounds} and range given by its horizontal bounds. Thus to satisfy
$g_{s_t}(B) \subset B$, we set $\beta=\frac{1}{2c}(|r|+\sqrt{r^2+4c})$ and require
\[
  \beta <\sqrt{\frac{r^2+4c}{4ac}}.
\]
This parameter region in which this is satisfied is
\[
    \{(r,c): \cC_3(r)<c<1\},
\]
where $\cC_3$ the largest root of \Eq{EllipsePoly}. This is the tan colored region in \Fig{RC_PlaneAIExist}(c) and (d).

The hyperbolic case, $\Delta>0$ case requires a bit more work, as the backward $\pm$ maps no longer have the up-down reflection symmetry of the forward map. Nevertheless we can still take $B= [-\xi_{max},\xi_{max}]$.
In order that the maps $g_{\pm}$ are well-defined and give distinct orbits, the radicand of \Eq{BackwardsAIMap} must be positive,
\[
    r^2+4c-4ac\xi_t^2>0.
\]
When $c >1$, so that $a = 1-c < 0$, this is always true.
Note that when $c>1$, the asymptotes of the hyperbola \Eq{Asymptotes} have a slope less than one when thought of as $\xi_{t-1}$ as  function of $\xi_t$. Thus $g_{s_t}$ is a contraction on $B$. This implies that
\[
    \{(r,c): c>1 \} \subset \cR_\cA^{-},
\]
as shown in red in \Fig{RC_PlaneAIExist}(c) and (d).

\section{Parabolic Case: Two Iterates}\label{app:ParabolicTwo}

For the case $n=2$, we require $\|D\cF^2\|_\infty <1$. This will necessarily give a larger parameter interval, $\cR^+_2 \supset \cR^+_1$, since  the product of two slopes can be less than one even when one of them is larger than one. The composition
\[
f_{s_{t+1}}(f_{s_t}(\xi_{t-1}))=s_{t+1}\sqrt{rs_t\sqrt{r\xi_{t-1}+1}+1},
\]
has derivative
\beq{2ndParallelDerivative}
f'_{s_{t+1}}(f_{s_t}(\xi_{t-1}))f_{s_t}'(\xi_{t-1})=\frac{s_t s_{t+1} r^2}{4 \sqrt{(rs_t\sqrt{r\xi_{t-1}+1}+1)(r\xi_{t-1}+1)}}.
\eeq
This has magnitude $1$ when  $\xi$ is a root of the cubic
polynomial
\[
    P_P(\xi_{t-1}) = 256 r^5\xi_{t-1}^3 +256 r^2 (3 r^2-1)\xi_{t-1}^2 +32 r(r^4+24 r^2-16)\xi_{t-1}  - r^8+ 32 r^4+256 r^2- 256
\]
Note that the discriminant of this polynomial, $2^{16}r^{20}(64-27r^6)$, is always positive on the interval $|r|<\frac{1}{\sqrt{2}}$;
thus the three roots of $P_P$ are real. 
Thus to enforce the derivative \Eq{2ndParallelDerivative} to have magnitude less than one, the roots must lie outside $B$.
Numerically, this gives the bound
\[
    |r|\lesssim 0.6984177, 
\]
which is only a slight improvement over the case $n=1$.

\section{Continuation Algorithm}\label{app:NumericalContinuation}

The persistence of AI states away from the AI limit can be proven using contraction arguments \cite{Sterling98,Hampton22}.
Straightforward numerical continuation can be used for periodic orbits of \Eq{Q3DMap}. 
The continuation algorithm is based on reformulating the  difference equation \Eq{RescaledDifEq}
to use a predictor-corrector method. A period-$n$ orbit, i.e., a sequence
\[
    \xi \in  \{\xi \in \bR^\infty \mid \xi_{t+n} \equiv \xi_{t}, \, \forall t \in \bZ \} \simeq \bR^n ,
\]
must be a zero of the function $\cG: \bR^n \times \bR \to \bR^n$ defined by
\[
   \cG(\xi,\eps) =\left( \cL_\eps(\xi_1,\xi_0,\xi_{n-1},\xi_{n-2}), \cL_\eps(\xi_2,\xi_1,\xi_{0},\xi_{n-1}), \ldots ,
    \cL_\eps(\xi_{0},\xi_{n-1},\xi_{n-2},\xi_{n-3}) \right) .
\]
We use a standard pseudo-arclength continuation algorithm \cite[Sec. 1.2.3]{Krauskopf07}, to find a discretization of a curve of solutions, $\cG(\xi,\eps) = 0$, at discrete points $(\xi^k,\eps^k)  \in \bR^n \times \bR$, for $k=0,1,\ldots$. 

Given a solution  $(\xi^k,\eps^k)$ at index $k$, a direction vector, $v^k = (\dot{\xi}^k,\dot{\eps}^k)$, and a predetermined arclength step size, $\ell$, the goal is to find a new solution in the hyperplane
orthogonal to $v^k$ at a distance $\ell$ from the previous solution.
Thus, to obtain $(\xi^{k+1}, \eps^{k+1})$, one must solve the system 
\bsplit{Arclength}
    &\cG(\xi^{k+1}, \eps^{k+1}) = 0 ,\\
    &\dot{\xi}^k(\xi^{k+1}-\xi^k)+\dot{\eps}^k(\eps^{k+1}-\eps^k)=\ell .
\esplit
This can be done iteratively, beginning with the point in the hyperplane along $v^k$, 
$(\xi^{k+1}, \eps^{k+1}) =  (\xi^k, \eps^k) + \frac{\ell}{\|v^k\|} v^k$, i.e.,
as an initial guess.
A solution to \Eq{Arclength} is then obtained using  Broyden's quasi-Newton method
to approximate the Jacobian of \Eq{Arclength} and 
a $QR$-decomposition to find its inverse \cite{Allgower90}. 
In our computations, the Broyden iteration stops when $\|\cG\|_\infty < 10^{-12}$
or after a predetermined maximum number of steps, here set to $150$.

The algorithm is initialized with an AI state, $(\xi^{0},0)$, which is found by iterating $f_{s_t}$ \Eq{AIMap}
for the given  period-$n$ symbol sequence $s$, beginning with a randomly selected point $ \xi_0 \in B$, and then iterating until
the orbit converges to a tolerance of $10^{-12}$. 
The initial vector $v^0 = (\dot{\xi}^0,0.005)$ is chosen so
that $\dot{\xi}^0$ solves the first $n$ rows of 
\beq{tangentVector}
    \begin{pmatrix}
        \partial_\xi \cG(\xi^k,\eps^k) & \partial_\eps \cG(\xi^k,\eps^k) \\
        \dot{\xi^k}^T & \dot{\eps}^k
    \end{pmatrix}
    \begin{pmatrix}
        \dot{\xi}^{k+1}\\
        \dot{\eps}^{k+1}
    \end{pmatrix}
    = \begin{pmatrix}
         0\\ 1
    \end{pmatrix} .
\eeq
Subsequently, each new direction vector $(\dot{\xi}^{k+1},\dot{\eps}^{k+1})$ is found by solving the full system \Eq{tangentVector} to obtain a normalized tangent vector.
 
In addition to the mapping parameters $a,c,r,$ and $\delta$, we choose an initial arclength step size $\ell$, which varies depending on the period $n$ and values of the parameters. For periodic orbits with length less than 10, we chose $\ell = 10^{-2}$. For longer orbits, we chose $\ell= 10^{-1}$. During the continuation process, $\ell$ is decreased by a factor of two if the solution is more than a distance of $0.1$ from the previous solution (i.e., if the solutions jumps ``too far away'').
The continuation runs until $\eps$ `turns around', i.e., $\eps^{k+1} < \eps^{k}$, or until $\ell$ becomes smaller than 
$10^{-15}$.

\bibliography{AIBibliography}
\bibliographystyle{alpha}

\end{document}